\newcommand{\R}{\mathbb{R}}
\newcommand{\RN}{{\mathbb{R}^N}}
\newcommand{\beq}{\begin{equation}}
\newcommand{\eeq}{\end{equation}}
\newtheorem{theorem}{Theorem}[section]
\newtheorem*{theorem*}{Theorem}
\newtheorem{lemma}[theorem]{Lemma}
\newtheorem{proposition}[theorem]{Proposition}
\theoremstyle{definition}
\newtheorem{remark}[theorem]{Remark}
\begin{document}

\title[The Neumann Green function of the annulus]{The Neumann Green function of the annulus}

\author[G. M. Rago]{Giuseppe Mario Rago}
\address{\noindent  Dipartimento di Matematica, Universit\'a degli Studi di Bari Aldo Moro,Italy }
\email{g.rago6@phd.uniba.it}


\subjclass{35B44, 35B33, 35J25}
\keywords{Green function; Neumann boundary conditions; annular domain.}

\maketitle
\begin{abstract}
Using Gegenbauer polynomials and the zonal harmonic functions we build an explicit representation formula for the Green function with Neumann boundary conditions in the annulus.
\end{abstract}

\section{Introduction and main results}
The Green function of the operator $-\Delta$ with Neumann boundary conditions is defined by 
\begin{equation*}
\begin{cases}
- \Delta_{x} G(x, y) = \delta_{y}(x) \ \ \ \ \ \ \mbox{in} \ \Omega, \ \\
\frac{\partial G(x, y)}{\partial \nu} = - \frac{1}{|\partial \Omega|}\ \ \ \ \ \ \ \ \ \ \ \ \ \ \mbox{on}\ \partial \Omega, \\
\int_{\Omega} G(x, y) \ dx = 0
\end{cases}
\end{equation*} \\ 
where $\delta_{y}$ is the Dirac function centered at $y$ and $\Omega$ is a bounded domain of $\mathbb{R}^{N}$ with $N \geq 3$. 
It is well known that the Green function can be written as 
\begin{equation*}
G(x,y) = \frac{1}{\omega_{N-1}(N-2)|x-y|^{N-2}} - H(x,y)
\end{equation*}
where $H(x,y)$ is a smooth function in $\Omega \times \Omega$ which is harmonic in both variables $x$ and $y$. Finally, the Robin function is defined by 
\begin{equation*}
\tau(x) := H(x,x).
\end{equation*}
The knowledge of the Green (or the Robin) function is of great importance in applications (\cite{BF}). Indeed, the explicit calculation of the Green function is an old problem (see for example the book by Courant and Hilbert, \cite{CH}) but it can be solved only in special cases (like the ball or half-space).
For these reasons, even if it is not possible to have the explicit expression, it is very important to deduce any properties of the Green function. \ \\
In this paper we are interested in the case where the domain is the annulus in $\mathbb{R}^{N}$, namely
\begin{equation*}
\Omega_{a,b} : = \{ x \in \mathbb{R}^{N} : a < |x| < b\},
\end{equation*}
where $a,b \in \mathbb{R}$. For simplicity, in computations, we will assume that $b=1$ and hence $\Omega_{a,1} : = \Omega_a$. \ \\
However, the goal of this paper is to give an explicit representation formula for the Green function with Neumann boundary conditions when $N \geq 3$, employing the zonal spherical harmonics, which have already been used to derive some important results on the existence of solutions for some classes of elliptic problems in annular domains (see for instance \cite{MRV}, \cite{MS}, \cite{PRV}). \ \\
The result is the following. 
\begin{theorem}\label{principale}
Let $\Omega_a$ be the annulus 
\begin{equation}\label{anello}
\Omega_{a} : = \{ x \in \mathbb{R}^{N} : a < |x| < 1\},
\end{equation}
where $0<a<1$. Then we have that the Green function in $\Omega_{a}$ is given by 
\begin{equation*}
G_{a}(x,y) = \frac{1}{\omega_{N-1}(N-2) |x-y|^{N-2}} - H_{a}(x, y)
\end{equation*}
where
\begin{equation}\label{ansatzregularpart}
H_{a}(x, y) = \frac{1}{\omega_{N-1}} \Bigg[ \sum_{m=1}^{+\infty} A_{m}(|x|) |y|^{m} + \sum_{m=1}^{+\infty} B_{m}(|x|) |y|^{-(N+m-2)} \Bigg] Z_{m} \left( \frac{x}{|x|}, \frac{y}{|y|} \right) + C_{0} |y|^{-(N-2)}
\end{equation}
with
\begin{equation}\label{Am}
A_{m}(|x|) = \frac{m+N-2}{m(2m+N-2)} \frac{|x|^{m}}{a^{2m+N-2}-1} \Bigg[ 1 +\frac{m}{m+N-2} \Bigg( \frac{a}{|x|} \Bigg)^{2m+N-2} \Bigg],
\end{equation}
\begin{equation}\label{Bm}
B_{m}(|x|)= \frac{a^{2m+N-2}}{2m+N-2} \frac{|x|^{m}}{a^{2m+N-2}-1} \Bigg[ 1 + \frac{m}{m+N-2} \left( \frac{1}{|x|} \right)^{2m+N-2} \Bigg]
\end{equation}
and 
\begin{equation}\label{C0}
C_0 = \frac{1}{(N-2)\omega_{N-1}} \frac{a^{N-1}}{1+a^{N-1}};
\end{equation}
\end{theorem}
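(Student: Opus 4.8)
My plan is to reduce the problem to a family of decoupled radial problems by expanding every object in zonal harmonics. First I would recall the Gegenbauer generating function and the resulting zonal expansion of the Newtonian kernel: writing $\Phi(x-y)=\frac{1}{\omega_{N-1}(N-2)\abs{x-y}^{N-2}}$, one obtains, for $\abs{y}<\abs{x}$,
\[
\Phi(x-y)=\frac{1}{\omega_{N-1}}\sum_{m=0}^{+\infty}\frac{1}{2m+N-2}\,\frac{\abs{y}^{m}}{\abs{x}^{m+N-2}}\,Z_{m}\left(\frac{x}{\abs{x}},\frac{y}{\abs{y}}\right),
\]
where the $Z_m$ are exactly the zonal harmonics of \eqref{ansatzregularpart}, and the symmetric expansion (with the roles of $x$ and $y$ interchanged) holds for $\abs{y}>\abs{x}$. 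Since $G_a=\Phi-H_a$ and $\Phi$ already carries the Dirac mass, the regular part $H_a$ must be harmonic in $x$ (and, by symmetry, in $y$) throughout $\Omega_a$; hence on each mode its radial factor is a combination of the two homogeneous solutions $\abs{x}^{m}$ and $\abs{x}^{-(m+N-2)}$ of the radial equation. This is precisely the structure of \eqref{Am}--\eqref{Bm}, so I would adopt the ansatz \eqref{ansatzregularpart} with undetermined coefficients and fix them by matching. Because the $Z_m$ are mutually orthogonal on the sphere, the Neumann datum splits mode by mode and the whole problem collapses to small linear systems.

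The crucial simplification is that the constant Neumann datum $-1/\abs{\partial\Omega_a}$ is itself the degree-zero zonal harmonic. Therefore, for every $m\geq1$ the boundary conditions reduce to the homogeneous matching $\partial_\nu H_a=\partial_\nu\Phi$ on both spheres $\abs{x}=1$ and $\abs{x}=a$. Here the two regimes of the kernel expansion enter decisively: on the outer sphere $\abs{y}<\abs{x}=1$, so $\Phi$ contributes only the power $\abs{y}^{m}$, whereas on the inner sphere $\abs{y}>\abs{x}=a$, so it contributes only $\abs{y}^{-(m+N-2)}$. Matching the coefficients of $\abs{y}^{m}$ and of $\abs{y}^{-(m+N-2)}$ separately then yields, for each $m$, the four scalar conditions
\[
A_m'(1)=-\frac{m+N-2}{2m+N-2},\qquad A_m'(a)=0,\qquad B_m'(1)=0,\qquad B_m'(a)=\frac{m\,a^{m-1}}{2m+N-2},
\]
i.e. two decoupled $2\times2$ systems for the coefficients of $\abs{x}^{m}$ and $\abs{x}^{-(m+N-2)}$ in $A_m$ and in $B_m$. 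Solving these elementary systems produces \eqref{Am} and \eqref{Bm}; as a consistency check the resulting coefficients satisfy the symmetry relation guaranteeing $H_a(x,y)=H_a(y,x)$.

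It remains to treat the degree-zero mode, where the prescribed flux and the normalization enter. Let $G_0$ denote the degree-zero (radial) part of $G_a$. On each of the two sub-annuli $a<\abs{x}<\abs{y}$ and $\abs{y}<\abs{x}<1$ it is a radial harmonic $A+B\abs{x}^{-(N-2)}$; I would impose the degree-zero Neumann condition $\partial_\nu G_0=-1/\abs{\partial\Omega_a}$ on both spheres (with $\abs{\partial\Omega_a}=\omega_{N-1}(1+a^{N-1})$) and require $\Phi_0-G_0$ to glue across $\abs{x}=\abs{y}$ into a single radial harmonic function. These conditions, compatible by virtue of the solvability relation $\int_{\partial\Omega_a}\partial_\nu G_a\,dS=-1$, force the coefficient of $\abs{y}^{-(N-2)}$ to equal exactly $C_0$ in \eqref{C0}, while the residual additive constant is pinned down by the mean-zero condition $\int_{\Omega_a}G_a\,dx=0$. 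Assembling the $m\geq1$ and $m=0$ parts gives \eqref{ansatzregularpart}.

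Finally I would check that the series \eqref{ansatzregularpart} genuinely defines a smooth function, harmonic in each variable on $\Omega_a\times\Omega_a$. Since $0<a<\abs{x},\abs{y}<1$ and $a^{2m+N-2}-1\to-1$, the coefficients in \eqref{Am}--\eqref{Bm} stay bounded and each term decays geometrically in $m$ (the polynomial growth of $Z_m$ in $m$ being harmless), so the series and all its derivatives converge uniformly on compact subsets, including across the diagonal where $\Phi$ is singular but $H_a$ is not. This confirms that $H_a$ is the smooth harmonic regular part and that $G_a=\Phi-H_a$ meets all three defining requirements. In my view the main obstacle is neither the algebra of the $2\times2$ systems nor the convergence estimate, but the careful two-sphere bookkeeping in the two regimes of the kernel expansion, and above all the degree-zero analysis, where the boundary flux $-1/\abs{\partial\Omega_a}$ and the mean-zero normalization must be reconciled to isolate $C_0$.
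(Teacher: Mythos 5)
Your strategy coincides at its core with the paper's own proof: both rest on the Gegenbauer/zonal expansion of the Newtonian kernel (the paper's Lemma \ref{espansione}), the harmonic ansatz \eqref{ansatzregularpart}, and mode-by-mode matching of the Neumann data, reduced to $2\times 2$ linear systems. The only organizational difference for the modes $m\geq 1$ is which variable carries the boundary condition: you impose it at $|x|=1$ and $|x|=a$ and match the linearly independent powers $|y|^{m}$ and $|y|^{-(N+m-2)}$, obtaining two decoupled systems, whereas the paper imposes \eqref{first}--\eqref{second} in the radial variable $r=|y|$ and solves a single coupled system for the pair $(A_m(\rho),B_m(\rho))$ by Cramer's rule; the coefficient matrix is the same matrix $D$ in both cases, and since the part of $H_a$ with $m\geq 1$ is symmetric under $x\leftrightarrow y$, your four scalar conditions do reproduce \eqref{Am} and \eqref{Bm} exactly. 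Your closing paragraph on uniform convergence of the full series is a worthwhile supplement, since the paper only discusses convergence of the kernel expansion.

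The genuine gap is in the degree-zero mode. Carried out in the $x$ variable, as you describe it (Neumann condition at $|x|=a$ and $|x|=1$, gluing across $|x|=|y|$), your computation pins down the coefficient of $|x|^{-(N-2)}$: one finds that the zero mode of the regular part is $C_0|x|^{-(N-2)}+c(y)$, with $c(y)$ free until a normalization is chosen. It does not produce the term $C_0|y|^{-(N-2)}$ appearing in \eqref{ansatzregularpart}: that term is constant in $x$, so its $x$-normal derivative vanishes, and in fact the formula of Theorem \ref{principale} satisfies the constant-flux condition in the $y$ variable rather than in $x$ (at $|x|=1$ the zero-mode $x$-normal derivative of $G_a$ equals $-1/\omega_{N-1}$, not $-1/|\partial\Omega_a|$). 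The paper obtains $C_0|y|^{-(N-2)}$ precisely because its proof imposes the boundary conditions with respect to $r=|y|$ throughout, even though the problem is stated in $x$; only the zero mode can detect this, because the higher modes are symmetric --- which is also why your proposed symmetry ``consistency check'' would fail exactly at $m=0$. To prove the theorem as stated, run your matching in the $y$ variable (pole at $x$), as the paper does; if you keep your $x$-variable computation, you are proving the transposed formula, and passing from one to the other requires a symmetry statement for the Neumann Green function that the normalizations in play do not automatically provide.
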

\begin{remark}
We can observe that in the representation formula of the Green function doesn't appear a constant as usual because this constant is identically equal to \textit{zero} because of the zero-average condition of the Green function on the annulus (see for instance formula $(45)$ of \cite{F}).
\end{remark}
\noindent We recall that $Z_{m} \left( \frac{x}{|x|}, \frac{y}{|y|} \right)$ are the zonal harmonics function associated with spherical harmonics of degree $m$. They are a key ingredient in the representation formula of the Green function and whose main properties will be recalled and exploited in the subsequent sections. \ \\
Furthermore, the representation formula derived in this paper is closely related, in spirit, to the explicit Green function obtained by Grossi and Vujadinović for annular domains with Dirichlet boundary conditions in \cite{GV}. In both settings, the use of zonal spherical harmonics and Gegenbauer polynomials allows for a precise description of the singular and regular parts of the Green function.\ \\
Nevertheless, the passage from Dirichlet to Neumann boundary conditions is not straightforward. \ \\
Indeed, in the Neumann case, the Laplace operator is not coercive and the Green function is uniquely determined only up to additive constants, which makes it necessary to impose a zero-average normalization. This feature produces additional terms in the representation formula and yields a reduced problem with a different structure compared to the Dirichlet case, where the Robin function alone governs the leading-order behavior.
As a consequence, although the two constructions share a common analytical framework, the Neumann Green function exhibits genuinely new features that are essential in applications to critical problems with Neumann boundary conditions. \ \\ \\
The paper is organized as follows: in Section $2$ we recall some useful preliminaries about zonal spherical harmonics and the Gegenbauer polynomials and in Section $3$ we prove Theorem \ref{principale}.

\section{Preliminaries}
In this section we would like to point out the basic properties of zonal harmonics which are going to be used through the paper. A good reference for the interested reader is the book \cite{A}. \ \\
By $H_{m}(\mathbb{R}^{N})$ we are going to denote the finite dimensional Hilbert space of all harmonic homogeneous polynomials of degree $m$. \ \\
Let us denote by $\mathbb{S}_{N-1}$ the unite sphere of $\mathbb{R}^{N}$. A \textit{spherical harmonic} of degree $m$ is the restriction to $\mathbb{S}_{N-1}$ of an element of $H_{m}(\R^{N})$. The collection of all spherical harmonics of degree $m$ will be denoted by $H_{m}(\mathbb{S}_{N-1})$. \ \\
Now we consider an important subset of $H_{m}(\mathbb{S}_{N-1})$, the so-called \textit{zonal harmonics}. They can be defined in different ways. One of these is the definition given in Theorem $5.38$ in \cite{A}. \ \\
For $x \in \RN$ with $N \geq 2$ and $\xi \in \mathbb{S}_{N-1}$ we define the zonal harmonic $Z_{m}(x, \xi)$ of degree $m$ as 
\begin{equation*}
Z_{m}(x, \xi) = (N+2m-2) \sum_{k=0}^{\lceil \frac{m}{2} \rceil} (-1)^{k} \frac{N(N+2) \cdots (N+2m-2k-4)}{2^{k} k! (m-2k)!}(x \cdot \xi)^{m-2k} |x|^{2k}
\end{equation*}
for all $m \geq 1$ and $Z_{0}(x, \xi)=1$. \ \\
Several properties of the zonal harmonics can be found in Chapter $5$ of \cite{A}. \ \\
Another important aspect of the zonal harmonics is their relation with ultraspherical polynomials. 
Indeed the zonal harmonics have a particularly simple expression in terms of Gegenbauer (or ultraspherical) polynomials $C_{m}^{\lambda}$. The latter can be defined in terms of generating functions. If we write (see \cite{SW} p.148)
\begin{equation}\label{sviluppo}
(1-2rt+r^2)^{-\lambda} =  \sum_{m=0}^{+\infty} C_{m}^{\lambda}(t) r^{m},
\end{equation}
where $0 \leq |r| < 1$, $|t| \leq 1$ and $\lambda>0$, then the coefficient $C_{m}^{\lambda}$ is called Gegenbauer polynomial of degree $m$ associated with $\lambda$.
The next result (see Theorem $2.1$ \cite{GV}) is related to representation of the zonal harmonics.
\begin{theorem}
If $N>2$ is an integer, $\lambda= \frac{N-2}{2}$ and $m=0,1,2, \cdots$ then we have that for all $x',y' \in \mathbb{S}_{N-1}$, it holds 
\begin{equation*}
Z_{m}(x',y') = \frac{2m+N-2}{N-2} C_{m}^{\lambda} (x' \cdot y').
\end{equation*}
\end{theorem}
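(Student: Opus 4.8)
The plan is to exploit the fact that, once we restrict to $x', y' \in \mathbb{S}_{N-1}$, both sides are polynomials of degree $m$ in the single real variable $t := x' \cdot y' \in [-1,1]$, so that the identity reduces to a term-by-term comparison of coefficients. On the left, since $|x'| = 1$, the factors $|x'|^{2k}$ in the explicit formula for $Z_m$ all equal $1$, and the zonal harmonic collapses to
\begin{equation*}
Z_m(x', y') = (N+2m-2) \sum_{k=0}^{\lfloor m/2 \rfloor} (-1)^k \frac{N(N+2)\cdots(N+2m-2k-4)}{2^k\, k!\, (m-2k)!}\, t^{m-2k}.
\end{equation*}
On the right, I would first extract a closed-form polynomial expansion of $C_m^\lambda$ from its generating function \eqref{sviluppo}, and then match the two expansions.

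To expand $C_m^\lambda$, I would rewrite $(1 - 2rt + r^2)^{-\lambda} = \bigl(1 - r(2t - r)\bigr)^{-\lambda}$ and apply the binomial series $(1-u)^{-\lambda} = \sum_{n \geq 0} \frac{(\lambda)_n}{n!} u^n$, where $(\lambda)_n := \lambda(\lambda+1)\cdots(\lambda+n-1)$ is the Pochhammer symbol. Substituting $u = r(2t-r)$ and expanding $(2t-r)^n$ by the binomial theorem produces a double sum in powers of $r$; collecting the coefficient of $r^m$ (that is, setting $n+j = m$, where $j$ is the exponent coming from the factor $(-r)^j$, which forces $j \leq m/2$) and using $\binom{m-k}{k} = \frac{(m-k)!}{k!(m-2k)!}$ yields
\begin{equation*}
C_m^\lambda(t) = \sum_{k=0}^{\lfloor m/2 \rfloor} (-1)^k \frac{(\lambda)_{m-k}}{k!\,(m-2k)!}\, (2t)^{m-2k}.
\end{equation*}

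The final step is a matter of careful bookkeeping. Setting $\lambda = \frac{N-2}{2}$, I would rewrite the Pochhammer symbol as a product over an arithmetic progression of step $2$,
\begin{equation*}
(\lambda)_{m-k} = \prod_{i=0}^{m-k-1}\Bigl(\tfrac{N-2}{2}+i\Bigr) = \frac{(N-2)\cdot N\cdot(N+2)\cdots(N+2m-2k-4)}{2^{m-k}},
\end{equation*}
factor out the leading $(N-2)$, and combine the surviving powers of $2$ with the factor $(2t)^{m-2k} = 2^{m-2k} t^{m-2k}$. After multiplying by the normalizing constant $\frac{2m+N-2}{N-2}$, the $(N-2)$ cancels, the powers of $2$ collapse to $2^{-k}$, and the coefficient of $t^{m-2k}$ becomes exactly
\begin{equation*}
(2m+N-2)\,(-1)^k\, \frac{N(N+2)\cdots(N+2m-2k-4)}{2^k\, k!\,(m-2k)!},
\end{equation*}
which coincides with the coefficient in $Z_m$ above, since $2m+N-2 = N+2m-2$. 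I expect the main obstacle to lie precisely in this product manipulation: one must correctly count the number of factors and the step of the progression, keep track of the extra leading factor $N-2$ that distinguishes $(\lambda)_{m-k}$ from the product appearing in $Z_m$, and verify that the accumulated powers of $2$ cancel cleanly. The degenerate case $m=0$ (where both sides equal $1$) and the cases of small $m$ can be checked directly as a sanity test.
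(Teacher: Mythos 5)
Your proof is correct, but it is worth noting that the paper itself does not prove this statement at all: it is quoted verbatim as Theorem 2.1 of the reference [GV] (Grossi--Vujadinovi\'c), so your argument is a genuinely self-contained addition rather than a reproduction of anything in the text. What you do is the direct, elementary route: expand $(1-2rt+r^2)^{-\lambda}=(1-r(2t-r))^{-\lambda}$ by the binomial series, extract the coefficient of $r^m$ to get the classical closed form
\begin{equation*}
C_m^{\lambda}(t)=\sum_{k=0}^{\lfloor m/2\rfloor}(-1)^k\frac{(\lambda)_{m-k}}{k!\,(m-2k)!}(2t)^{m-2k},
\end{equation*}
and then match coefficients against the explicit formula for $Z_m$ after writing $(\lambda)_{m-k}$ with $\lambda=\tfrac{N-2}{2}$ as $2^{-(m-k)}(N-2)N(N+2)\cdots(N+2m-2k-4)$; I checked the bookkeeping (the cancellation of $N-2$, the collapse of the powers of $2$ to $2^{-k}$, and the identification $N+2m-2=2m+N-2$) and it is exact, including the degenerate convention that the product is empty when $m-k=1$. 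Two small remarks. First, you silently and correctly replaced the paper's upper summation limit $\lceil m/2\rceil$ by $\lfloor m/2\rfloor$; the ceiling in the paper's definition of $Z_m$ is a typo, since $(m-2k)!$ is undefined for $k>\lfloor m/2\rfloor$. Second, the rearrangement of the double sum when collecting the coefficient of $r^m$ deserves one line of justification (absolute convergence for $|r|$ small, or simply working with formal power series), though this is standard. An alternative, slicker proof worth knowing: on $\mathbb{S}_{N-1}$ both $Z_m(\cdot,y')$ and $C_m^{\lambda}(\cdot\cdot y')$ are spherical harmonics of degree $m$ invariant under rotations fixing $y'$, a space which is one-dimensional, so they are proportional, and the constant is fixed by evaluating at $x'=y'$; your computational proof trades this structural insight for complete explicitness.
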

\begin{remark}
In the following we will use these notations: $|x|=\rho$ and $|y|=r$. 
\end{remark}
\section{The representation formula for the Green function}
First of all, we consider the annulus $\Omega_{a}$ as in \eqref{anello} and we consider the Neumann Green function 
\begin{equation*}
\begin{cases}
\displaystyle - \Delta_{x} G_{a}(x, y) = \delta_{y}(x) \ \ \ \ \ \ \ \ \ \ \ \ \ \ \ \mbox{in} \ \Omega_a, \ \\
\displaystyle \frac{\partial G_{a}(x, y)}{\partial \nu} = - \frac{1}{|\partial \Omega_a|} \ \ \ \ \ \ \ \ \ \ \ \ \ \ \ \ \mbox{on}\ \partial \Omega_a, \\
\displaystyle \int_{\Omega} G_{a}(x, y) \ dx = 0
\end{cases}
\end{equation*} \\ 
where $|\partial \Omega_a| = \omega_{N-1} (1 + a^{N-1})$ and $\omega_{N-1}$ is the measure of the surface area of the sphere in $\mathbb{R}^{N-1}$. \ \\
The regular part
\begin{equation}\label{parteregepartesing}
H_{a}(x, y ) : = \Gamma(y-x) - G_{a}(x, y)
\end{equation}
satisfies the following problem
\begin{equation}\label{robin}
\begin{cases}
\displaystyle - \Delta_{x} H(x, y) = 0 \ \ \ \ \ \ \ \ \ \ \ \ \ \ \ \ \ \ \ \ \ \ \ \ \ \ \ \ \ \ \ \ \mbox{in} \ \Omega_a, \ \\
\displaystyle  \frac{\partial H(x, y)}{\partial \nu} = \frac{\partial \Gamma(x-y)}{\partial \nu} + \frac{1}{|\partial \Omega_a|}\ \ \ \ \ \ \mbox{on}\ \partial \Omega_a, \\
\end{cases}
\end{equation}
Our first aim is to write the Green function for the annulus in terms of the zonal spherical harmonics $Z_{m} \left( \frac{x}{|x|}, \frac{y}{|y|} \right)$. The starting point for our results is going to be the next lemma which will play an important role in proving Theorem \ref{principale}.
\begin{lemma}\label{espansione}
We have that, for any $|x|, |y| \leq 1$ such that $|y|<|x|$ and $y \neq x$,
\begin{equation}\label{prima}
\frac{1}{|x-y|^{N-2}} = \sum_{m=0}^{+\infty} \frac{N-2}{2m+N-2} \frac{|y|^{m}}{|x|^{m+N-2}} Z_{m} \left( \frac{x}{|x|}, \frac{y}{|y|} \right).
\end{equation}
Instead, for any $|x|, |y| \leq 1$ such that $|x|<|y|$ and $y \neq x$,
\begin{equation}\label{seconda}
\frac{1}{|x-y|^{N-2}} = \sum_{m=0}^{+\infty} \frac{N-2}{2m+N-2} \frac{|x|^{m}}{|y|^{m+N-2}} Z_{m} \left( \frac{x}{|x|}, \frac{y}{|y|} \right).
\end{equation}
\end{lemma}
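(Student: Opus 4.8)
The plan is to reduce both identities to the generating-function definition \eqref{sviluppo} of the Gegenbauer polynomials, combined with the conversion formula $Z_m(x',y') = \frac{2m+N-2}{N-2}\,C_m^{\lambda}(x'\cdot y')$ recalled in the theorem of Section 2, with $\lambda = \frac{N-2}{2}$. Writing $\rho = |x|$, $r = |y|$ and $t = \frac{x}{|x|}\cdot\frac{y}{|y|} \in [-1,1]$, I would first record the elementary identity
\begin{equation*}
|x-y|^2 = \rho^2 - 2\rho r\, t + r^2 .
\end{equation*}

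For the case $r < \rho$ treated in \eqref{prima}, I would factor out the larger radius and use that $-\frac{N-2}{2} = -\lambda$, so that
\begin{equation*}
\frac{1}{|x-y|^{N-2}} = \rho^{-(N-2)} \Big(1 - 2\tfrac{r}{\rho}\, t + \tfrac{r^2}{\rho^2}\Big)^{-\lambda}.
\end{equation*}
Setting the expansion variable to $s = r/\rho$, the hypothesis $r < \rho$ guarantees $0 \le s < 1$ while $|t| \le 1$, so \eqref{sviluppo} applies and gives $\big(1 - 2st + s^2\big)^{-\lambda} = \sum_{m=0}^{+\infty} C_m^{\lambda}(t)\, s^m$. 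Substituting $s = r/\rho$ and then replacing $C_m^{\lambda}(t)$ by $\frac{N-2}{2m+N-2}\, Z_m\big(\frac{x}{|x|},\frac{y}{|y|}\big)$ produces exactly the right-hand side of \eqref{prima}. The identity \eqref{seconda} then follows by the symmetric argument: when $\rho < r$ one factors out $r$ instead, writing $\frac{1}{|x-y|^{N-2}} = r^{-(N-2)}\big(1 - 2\frac{\rho}{r}\, t + \frac{\rho^2}{r^2}\big)^{-\lambda}$ and expanding in $s = \rho/r \in [0,1)$; equivalently, one simply exchanges the roles of $x$ and $y$ in \eqref{prima}, using that $Z_m$ is symmetric in its two arguments.

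I do not expect a genuine obstacle here, since the computation is essentially the classical Gegenbauer expansion of the Newtonian kernel; the only point requiring care is the applicability and convergence of the series. This is secured by checking the three hypotheses of \eqref{sviluppo}: one needs $\lambda > 0$, which holds because $N \ge 3$ forces $\lambda = \frac{N-2}{2} \ge \frac12$; one needs $|t| \le 1$, which holds since $t$ is an inner product of unit vectors; and one needs the strict inequality $s < 1$, which is precisely the separation assumption $|y| < |x|$ (respectively $|x| < |y|$) together with $y \neq x$. Because the convergence in \eqref{sviluppo} is absolute and locally uniform for $s$ in compact subsets of $[0,1)$, the term-by-term substitution of the conversion formula is fully justified, which completes the proof.
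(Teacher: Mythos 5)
Your proof is correct and follows essentially the same route as the paper: factor out the larger radius, apply the Gegenbauer generating function \eqref{sviluppo} with expansion variable $|y|/|x|$ (resp.\ $|x|/|y|$), convert $C_m^{\lambda}$ to $Z_m$ via the theorem of Section 2, and obtain \eqref{seconda} by exchanging the roles of $x$ and $y$. If anything, your explicit verification of the three hypotheses of \eqref{sviluppo} (namely $\lambda>0$, $|t|\le 1$, and the strict inequality on the ratio of radii) is slightly more careful than the paper's own argument.
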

\begin{proof}
We will prove only \eqref{prima} because with a similar computation it can be proved also the \eqref{seconda}. \ \\
Let us consider $\lambda=\frac{N-2}{2}$,
\begin{equation*}
t := \frac{x}{|x|} \cdot \frac{y}{|y|} \in [-1,1]
\end{equation*}
and $r:=\frac{|y|}{|x|} < 1$.
Now, by using formula \eqref{sviluppo}, we obtain that
\begin{equation*}
\begin{aligned}
\frac{1}{|x-y|^{N-2}} & = |x|^{-(N-2)} \Bigg( 1-2 \frac{|y|}{|x|} \left( \frac{y}{|y|} \cdot \frac{x}{|x|} \right) + \frac{|y|^{2}}{|x|^{2}} \Bigg)^{-\lambda} =  |x|^{-(N-2)} (1-2rt+r^{2})^{-\lambda}
\\& =  |x|^{-(N-2)} \sum_{m=0}^{+\infty} C_{m}^{\lambda}(t) r^{m} = \sum_{m=0}^{+\infty} \frac{N-2}{2m+N-2} \frac{|y|^{m}}{|x|^{m+N-2}} Z_{m} \left(\frac{x}{|x|}, \frac{y}{|y|}\right).
\end{aligned}
\end{equation*}
Furthermore, if we exchange the role of $x$ and $y$ we obtain the \eqref{seconda} and we get the claim. Moreover, the series in \eqref{prima} and \eqref{seconda} converges absolutely and uniformly on compact subsets of the region $|y|<|x|$ (respectively $|x|<|y|$).
\end{proof}
The following result exploits the two different expressions of the radial derivative of the singular part of the Green function. \ \\

\begin{proposition}\label{y<xi}
We have that, for any $|x|, |y| \leq 1$ such that $|y|<|x|$ and $y \neq x$,
\begin{equation}\label{y<xi}
\partial_{r}\Gamma(y-x) = \frac{1}{\omega_{N-1}} \sum_{m=0}^{+\infty} \frac{m}{2m+N-2} \frac{|y|^{m-1}}{|x|^{m+N-2}} Z_{m} \left( \frac{x}{|x|}, \frac{y}{|y|} \right).
\end{equation}
Instead, for any $|x|, |y| \leq 1$ such that $|x|<|y|$ and $y \neq x$,
\begin{equation}\label{xi<y}
\partial_{r}\Gamma(y-x) = \frac{1}{\omega_{N-1}} \sum_{m=0}^{+\infty} -\frac{m+N-2}{2m+N-2} \frac{|x|^{m}}{|y|^{m+N-1}} Z_{m} \left( \frac{x}{|x|}, \frac{y}{|y|} \right).
\end{equation}
\end{proposition}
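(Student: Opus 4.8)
The plan is to obtain both formulas by differentiating term by term the two expansions of Lemma~\ref{espansione} with respect to the radial variable $r=|y|$. Recalling that the fundamental solution is $\Gamma(y-x)=\frac{1}{\omega_{N-1}(N-2)|x-y|^{N-2}}$, the essential structural remark is that the zonal harmonic $Z_{m}\left(\frac{x}{|x|},\frac{y}{|y|}\right)$ depends only on the two unit directions $\frac{x}{|x|}$ and $\frac{y}{|y|}$, hence it is independent of $r$. Writing $y=r\,\omega$ with $\omega=\frac{y}{|y|}\in\mathbb{S}_{N-1}$ held fixed, the only $r$-dependence of each summand sits in the explicit radial monomial, so $\partial_{r}$ acts solely on that factor.

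In the region $|y|<|x|$, substituting \eqref{prima} into $\Gamma$ and cancelling the factor $N-2$ gives
\[
\Gamma(y-x)=\frac{1}{\omega_{N-1}}\sum_{m=0}^{+\infty}\frac{1}{2m+N-2}\frac{r^{m}}{|x|^{m+N-2}}Z_{m}\left(\frac{x}{|x|},\frac{y}{|y|}\right),
\]
and differentiating the monomial $r^{m}$ (which yields $m\,r^{m-1}$) reproduces exactly \eqref{y<xi}; note in passing that the $m=0$ term is annihilated, consistently with the sum effectively starting at $m=1$. In the complementary region $|x|<|y|$, substituting \eqref{seconda} and differentiating the monomial $r^{-(m+N-2)}$ produces the factor $-(m+N-2)\,r^{-(m+N-1)}$, which yields \eqref{xi<y}.

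The only point demanding justification, and the main (though routine) obstacle of the argument, is the interchange of $\partial_{r}$ with the infinite sum. For fixed directions, each series is, apart from the angular factors $Z_{m}$, a power series in the ratio $|y|/|x|$ (respectively $|x|/|y|$), which by Lemma~\ref{espansione} converges absolutely and uniformly on compact subsets of $\{|y|<|x|\}$ (respectively $\{|x|<|y|\}$). On such a compact subset one has $|y|\leq\varrho\,|x|$ (resp. $|x|\leq\varrho\,|y|$) for some $\varrho<1$, while the reproducing property of the zonal harmonics gives the polynomial bound $\left|Z_{m}\left(\frac{x}{|x|},\frac{y}{|y|}\right)\right|\leq Z_{m}\left(\frac{x}{|x|},\frac{x}{|x|}\right)=O(m^{N-2})$. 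Since the extra factor $m$ (resp. $m+N-2$) introduced by differentiation is dominated by the geometric decay $\varrho^{m}$, the formally differentiated series still converges uniformly on compact subsets. The standard theorem on term-by-term differentiation of uniformly convergent series of $C^{1}$ functions then legitimizes the exchange and completes the proof.
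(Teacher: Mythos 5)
Your proof is correct and takes essentially the same approach as the paper: both substitute the expansions of Lemma \ref{espansione} into $\Gamma$ and differentiate term by term in $r=|y|$, using that the zonal factor $Z_{m}$ depends only on the angular variables. The only difference is that your justification of interchanging $\partial_{r}$ with the sum, via the bound $|Z_{m}|\leq Z_{m}(\xi,\xi)=O(m^{N-2})$ together with the geometric decay of the radial ratio on compact subsets, is more explicit than the paper's, which simply asserts that the differentiated series converges uniformly.
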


\begin{proof} 
In order to prove \eqref{y<xi} we observe that
\begin{equation*}
\begin{aligned}
\partial_{r} \Gamma(y-x) & = \frac{1}{\omega_{N-1}(N-2)} \partial_{r} \frac{1}{|x-y|^{N-2}} 
\\& =  \frac{1}{\omega_{N-1}(N-2)} \frac{\partial}{\partial r} \Bigg[ \sum_{m=0}^{+\infty} \frac{N-2}{2m+N-2} \frac{|y|^{m}}{|x|^{m+N-2}} Z_{m} \left( \frac{x}{|x|}, \frac{y}{|y|} \right) \Bigg] 
\\& =  \frac{1}{\omega_{N-1}} \sum_{m=0}^{+\infty} \frac{m}{2m+N-2} \frac{|y|^{m-1}}{|x|^{m+N-2}} Z_{m} \left( \frac{x}{|x|}, \frac{y}{|y|} \right)
\end{aligned}
\end{equation*}
and this concludes the proof. \ \\
While, in order to obtain \eqref{xi<y}, we observe that
\begin{equation*}
\begin{aligned}
\partial_{r} \Gamma(y-x) & = \frac{1}{\omega_{N-1}(N-2)} \partial_{r} \frac{1}{|x-y|^{N-2}} 
\\&=  \frac{1}{\omega_{N-1}(N-2)} \frac{\partial}{\partial r} \Bigg[ \sum_{m=0}^{+\infty} \frac{N-2}{2m+N-2} \frac{|x|^{m}}{|y|^{m+N-2}} Z_{m} \left( \frac{x}{|x|}, \frac{y}{|y|} \right) \Bigg] 
\\& =  \frac{1}{\omega_{N-1}} \sum_{m=0}^{+\infty} - \frac{m+N-2}{2m+N-2}  \frac{|x|^{m}}{|y|^{m+N-1}} Z_{m} \left( \frac{x}{|x|}, \frac{y}{|y|} \right)
\end{aligned}
\end{equation*}
and this concludes the proof. \ \\
Moreover, by Lemma \ref{espansione}, the series representation of the Green kernel converges uniformly on compact subsets of region $|y|<|x|$. Moreover, the corresponding series of radial derivatives converges uniformly as well, thus allowing term-by-term differentiation.
\end{proof}

Now, we have all the tools that we need for proving Theorem \ref{principale}.

\begin{proof}[Proof of Theorem \ref{principale}.]
Firstly, we point out that the ansatz of the regular part in \eqref{ansatzregularpart} follows from the general expansion of harmonic functions in annular domain in terms of spherical harmonics (see for instance formula $(2.4)$ of \cite{L}). \ \\
Therefore, in order to conclude the proof, we need to understand what the constants $A_{m}(|x|)$, $B_{m}(|x|)$ and $C_{0}$ are. \ \\
We recall that the regular part satisfies \eqref{robin} and, in order to find the right constants, we have to use the Neumann boundary conditions. \ \\
First of all, we can observe that on the two components of the boundary the external normal $\nu$ has opposite orientations with respect to the radial derivative:
\begin{itemize}
\item on the outer sphere, for $r=1$ : $\partial_{\nu} = \partial_{r}$;
\item on the inner sphere, for $r=a$ : $\partial_{\nu} = - \partial_{r}$.
\end{itemize}
Therefore, we can rewrite \eqref{robin} as
\begin{equation}\label{first}
\frac{\partial H(x, y)}{\partial r}_{\Big |_{r=1}} = \frac{\partial \Gamma(x-y)}{\partial r}_{\Big |_{r=1}}  + \frac{1}{|\partial \Omega_a|}
\end{equation}
and
\begin{equation}\label{second}
\frac{\partial H(x, y)}{\partial r}_{\Big |_{r=a}} = \frac{\partial \Gamma(x-y)}{\partial r}_{\Big |_{r=a}}  - \frac{1}{|\partial \Omega_a|}.
\end{equation}
Now, using \eqref{y<xi} and \eqref{xi<y}, we can evaluate $\partial_{r} \Gamma(x-y)$ for $r=a$ and for $r=1$. \ \\
In particular, for $r=a$, using \eqref{y<xi}, we have that
\begin{equation*}
\begin{aligned}
\frac{\partial \Gamma(x-y)}{\partial r}_{\Big |_{r=a}} & =\frac{1}{\omega_{N-1}} \sum_{m=0}^{+\infty} \frac{1}{2m+N-2} \frac{ma^{m-1}}{|x|^{m+N-2}} Z_{m} \left( \frac{x}{|x|}, \frac{y}{|y|} \right)
\\& = \frac{1}{\omega_{N-1}} \sum_{m=1}^{+\infty} \frac{1}{2m+N-2} \frac{ma^{m-1}}{|x|^{m+N-2}} Z_{m} \left( \frac{x}{|x|}, \frac{y}{|y|} \right)
\end{aligned}
\end{equation*}
because $Z_{0} \left( \frac{y}{|y|}, \frac{x}{|x|} \right) = 1$. \ \\
Instead, using \eqref{xi<y}, we have that
\begin{equation*}
\begin{aligned}
\frac{\partial \Gamma(x-y)}{\partial r}_{\Big |_{r=1}} & = \frac{1}{\omega_{N-1}} \sum_{m=0}^{+\infty}-\frac{m+N-2}{2m+N-2} |x|^{m} Z_{m} \left( \frac{x}{|x|}, \frac{y}{|y|} \right)
\\& = - \frac{1}{\omega_{N-1}} + \sum_{m=1}^{+\infty} -\frac{m+N-2}{2m+N-2} |x|^{m} Z_{m} \left( \frac{x}{|x|}, \frac{y}{|y|} \right).
\end{aligned}
\end{equation*}
The next step is to compute the normal derivative of the regular part. \ \\
It is easy to verify that
\begin{equation*}
\begin{aligned}
\partial_{r} H(x, y) & = \frac{1}{\omega_{N-1}} \sum_{m=1}^{+\infty} \left( m A_{m}(\rho) r^{m-1} - (m+N-2) B_{m} (\rho) r^{-(m+N-1)} \right) Z_{m} \left( \frac{x}{|x|}, \frac{y}{|y|} \right) 
\\&+ (2-N) C_{0} r^{1-N}.
\end{aligned}
\end{equation*}
Then, in particular, we have 
\begin{equation*}
\begin{aligned}
\partial_{r} H(x, y)_{\Big|_{r=a}} & = \frac{1}{\omega_{N-1}} \sum_{m=1}^{+\infty} \left(m a^{m-1} A_{m}(\rho) - (m+N-2) a^{-(m+N-1)}  B_{m} (\rho) \right) Z_{m} \left( \frac{x}{|x|}, \frac{y}{|y|} \right) 
\\& + (2-N) C_{0} a^{1-N}
\end{aligned}
\end{equation*}
and
\begin{equation*}
\begin{aligned}
\partial_{r} H(x, y)_{\Big|_{r=1}} & = \frac{1}{\omega_{N-1}} \sum_{m=1}^{+\infty} \left(m A_{m}(\rho) - (m+N-2) B_{m} (\rho) \right) Z_{m} \left( \frac{x}{|x|}, \frac{y}{|y|} \right) 
\\& + (2-N) C_{0}.
\end{aligned}
\end{equation*}
Now, for all $m \geq 1$, we need that the individual contributions given by the radial derivative of the function $\Gamma(y-x)$ and the regular part $H(y,x)$ balance each other and cancel out in the case $r=a$ and in the case $r=1$. \ \\
Now, if we impose these conditions, we obtain the following system
\begin{equation*}
\begin{cases}
\displaystyle m A_{m}(\rho) - (m+N-2) B_{m}(\rho) = - \frac{m+N-2}{2m+N-2} \rho^{m}\ \\
\displaystyle m a^{m-1} A_{m}(\rho) - (m+N-2) a^{-(m+N-1)} B_{m}(\rho) = \frac{m a^{m-1}}{2m+N-2} \rho^{-(m+N-2)}
\end{cases}
\end{equation*} 
in which we have to find $A_{m}(\rho)$ and $B_{m}(\rho)$. \ \\
First of all, let us consider
$$
D =\begin{bmatrix}
\displaystyle m & -(m+N-2) \\
\displaystyle ma^{m-1} & -(m+N-2)a^{-(m+N-1)}
\end{bmatrix}
$$
$$
D_{A_{m}(\rho)} =
\begin{bmatrix}
\displaystyle -\dfrac{m+N-2}{2m+N-2} \rho^{m} & -(m+N-2) \\
\displaystyle \dfrac{m a^{m-1}}{2m+N-2} \rho^{-(m+N-2)} & -(m+N-2)a^{-(m+N-1)}
\end{bmatrix}
$$
$$
D_{B_{m}(\rho)} =\begin{bmatrix}
\displaystyle m & -\dfrac{m+N-2}{2m+N-2} \rho^{m} \\
\displaystyle ma^{m-1} & \dfrac{m a^{m-1}}{2m+N-2} \rho^{-(m+N-2)}.
\end{bmatrix}
$$
We can observe that 
\begin{equation*}
\mbox{det} D = m (m+N-2) (a^{m-1} - a^{-(N+m-1)}) \neq 0.
\end{equation*}
Furthermore,
\begin{equation*}
\mbox{det} D_{A_{m}(\rho)} = \frac{(m+N-2)^{2}}{2m+N-2} \rho^{m} a^{-(m+N-1)} + \frac{m (m+N-2)}{2m+N-2} a^{m-1} \rho^{-(m+N-2)}
\end{equation*}
and
\begin{equation*}
\mbox{det} D_{B_{m}(\rho)} =  \frac{m^2 a^{m-1}}{2m+N-2} \rho^{-(m+N-2)} + \frac{m (m+N-2)}{2m+N-2} a^{m-1} \rho^{m}.
\end{equation*}
Now, we are able to find 
\begin{equation*}
A_{m}(\rho) = \frac{\mbox{det} D_{A_{m}(\rho)}}{\mbox{det} D} 
\end{equation*}
and 
\begin{equation*}
B_{m}(\rho) =  \frac{\mbox{det} D_{B_{m}(\rho)}}{\mbox{det} D}.
\end{equation*}
Indeed, we have that 
\begin{equation*}
\begin{aligned}
A_{m}(|x|) & = \frac{1}{m(2m+N-2)} \frac{1}{a^{m-1}-a^{-(m+N-1)}} [(m+N-2) a^{-(m+N-1)} |x|^{m} + ma^{m-1} |x|^{-(m+N-2)}]
\\& = \frac{1}{m(2m+N-2)} \frac{1}{a^{2m+N-2}-1} [(m+N-2) |x|^{m} + m a^{2m+N-2} |x|^{-(m+N-2)}]
\\& = \frac{m+N-2}{m(2m+N-2)} \frac{|x|^{m}}{a^{2m+N-2}-1} \Bigg[ 1 +\frac{m}{m+N-2} \Bigg( \frac{a}{|x|} \Bigg)^{2m+N-2} \Bigg];
\end{aligned}
\end{equation*}
while 
\begin{equation*}
\begin{aligned}
B_{m}(|x|) & = \frac{a^{m-1}}{(2m+N-2)(m+N-2)} \frac{1}{a^{-(m+N-1)}} \frac{1}{a^{2m+N-2}-1} \Bigg[ m |x|^{-(m+N-2)} + (m+N-2) |x|^{m} \Bigg] 
\\& = \frac{a^{2m+N-2}}{2m+N-2} \frac{|x|^{m}}{a^{2m+N-2}-1} \Bigg[ 1 + \frac{m}{m+N-2} \left( \frac{1}{|x|} \right)^{2m+N-2} \Bigg].
\end{aligned}
\end{equation*}
Finally, in order to estimate the remaining terms, we can rewrite \eqref{first} as
\begin{equation*}
(2-N) C_{0} = -\frac{1}{\omega_{N-1}} + \frac{1}{\omega_{N-1}(1+a^{N-1})}
\end{equation*}
and we can rewrite \eqref{second} as 
\begin{equation*}
(2-N) C_{0} a^{1-N} =- \frac{1}{\omega_{N-1}(1+a^{N-1})}.
\end{equation*}
Therefore, with a straightforward computation, we can observe that
\begin{equation*}
C_0 = \frac{1}{(N-2)\omega_{N-1}} \frac{a^{N-1}}{1+a^{N-1}}.
\end{equation*}
\end{proof}

\begin{remark}
In \cite{S}, the main contribution concerning the annular domain lies in the detailed analysis of the Neumann Green function carried out in Section $7$. There, the radial symmetry of the annulus is exploited to obtain a representation of the Green function that allows for a precise control of its regular part and of the interaction terms arising in the Lyapunov–Schmidt reduction. This representation plays a crucial role in the study of multi-bubble configurations, as it makes it possible to express the interaction among concentrating peaks in terms of a finite-dimensional matrix built from the Green function and the associated Robin function. The use of zonal harmonics is therefore instrumental in capturing the symmetry of the domain and in deriving accurate estimates, rather than in producing an explicit closed-form expression. \ \\
Finally, we point out that our result obtained in dimension $N \geq 3$ is perfectly coherent with what Salazar obtained in the case $N=3$ (see Section $7$ of \cite{S}).
\end{remark}

\begin{remark}
In a similar spirit, the present analysis can be used to investigate a four-dimensional analogue of the problem studied by Salazar in dimension $N=3$. In Salazar’s work, the construction of sign-changing multi-bubble solutions for the critical Neumann problem is based on a finite-dimensional reduction in which the interaction among concentrating peaks is encoded in a matrix whose entries are expressed in terms of the Green function and the associated Robin function. In the case of the annulus, the explicit control of the Neumann Green function plays a fundamental role, as it allows one to verify the non-degeneracy conditions required by the reduction and to identify symmetric configurations leading to multi-bubble solutions. \ \\
In dimension $N=4$, an analogous reduction scheme is available through the results of Pistoia, Rago and Vaira (\cite{PRV}), which provide a precise description of the interaction among multiple bubbles for critical elliptic problems and identify the corresponding reduced energy expansions in terms of the Green and Robin functions.
When combined with the present representation of the Neumann Green function in annular-type domains, this framework makes it possible to study multi-bubble configurations in the four-dimensional annulus by following the same conceptual strategy adopted by Salazar in dimension $N=3$. In this sense, the present results supply the analytical ingredient needed to handle the geometry of the annulus, while the work of Pistoia, Rago and Vaira gives the perturbative and variational tools required to carry out the finite-dimensional analysis in dimension $N=4$.
\end{remark}

\section*{Acknowledgments}
The author thanks Professor Giusi Vaira for her valuable advice and for the useful discussions.
\section*{Data Availability Statements}
All data generated or analysed during this study are included in this article.
\section*{Declarations}
{\bf Conflicts of Interest:} The authors declare they have no financial interests.

\end{document}